\newcommand{\length}{\operatorname{\lambda}}
\newcommand{\mf}[1]{\mathfrak #1}
\DeclareMathOperator{\eh}{e}
\DeclareMathOperator{\ehk}{e_{HK}}
\DeclareMathOperator{\ord}{ord}
\renewcommand{\frq}[1]{{#1}^{[q]}}
\newtheorem{theorem}{Theorem}
\newtheorem{lemma}[theorem]{Lemma}
\newtheorem{proposition}[theorem]{Proposition}
\newtheorem{corollary}[theorem]{Corollary}
\theoremstyle{definition}
\newtheorem{definition}[theorem]{Definition}
\theoremstyle{remark}
\newtheorem{question}[theorem]{Question}
\newtheorem{conjecture}[theorem]{Conjecture}
\numberwithin{theorem}{section}
\numberwithin{equation}{section}
\begin{document}

\title[Hilbert--Kunz multiplicity of the powers of an ideal]
{Hilbert--Kunz multiplicity of the powers of an ideal}

\author{Ilya Smirnov}
\address{Department of Mathematics, Stockholm University, S-106 91, Stockholm, Sweden}
\email{smirnov@math.su.se}

\date{\today}

\begin{abstract}
We study Hilbert--Kunz multiplicity of the powers of an ideal and establish existence of the second coefficient at
the full level of generality, thus extending a recent result of Trivedi.
We describe the second coefficient as the limit of the Hilbert coefficients of Frobenius powers 
and show that it is additive in short exact sequences and satisfies a Northcott-type inequality. 

\end{abstract}

\maketitle

\section{Introduction}

This note studies Hilbert--Kunz multiplicity -- a multiplicity theory native to positive characteristic which
mimics the definition of Hilbert--Samuel multiplicity but replaces regular powers $I^n$ by 
Frobenius powers $I^{[p^n]} := \{x^{p^n} \mid x \in I\}$.
\begin{definition}
Let $(R, \mf m)$ be a local ring of positive characteristic $p > 0$ and dimension $d$ 
and $I$ be an $\mf m$-primary ideal.
Then the Hilbert--Kunz multiplicity of a finite $R$-module $M$ with respect to $I$ is 
\[
\ehk(I, M) = \lim_{q \to \infty} \frac{\length (M/\frq{I}M)}{q^{d}},
\]
where $q = p^e$ is a varying power of $p$.
\end{definition}
As its name suggests, Hilbert--Kunz multiplicity originates from the work of Kunz (\cite{Kunz1, Kunz2}),
who initiated the study of the sequence giving its definition. The existence of the limit was proven
later by Monsky (\cite{Monsky}). 

It is natural to seek relations between the classical Hilbert--Samuel theory of multiplicity and the new theory. 
One such relation, in particular, is given by inequalities 
\[
\frac 1{d!} \eh(I) \leq \ehk (I) \leq \eh(I),
\]
where the left inequality directly follows from the inclusion $\frq{I} \subseteq I^q$. 
It was shown by Hanes (\cite{Hanes2}) that the left inequality is never sharp, 
but Watanabe and Yoshida (\cite[Theorem~1.1]{WatanabeYoshidaTwo}) and Hanes (\cite[Corollary~II.7]{Hanes})
proved that the inequality is sharp asymptotically, {\it i.e.}, 
$\lim\limits_{k \to \infty} \frac{d!\ehk (I^k)}{\eh(I^k)} = 1$.
Because $\eh(I^k) = k^d \eh(I)$, this result can be restated as 
$\lim\limits_{k \to \infty} \frac{d!\ehk(I^k)}{k^d} = \eh(I)$.
 Later, Hanes (\cite[Theorem~3.2]{Hanes2}) improved this  by showing that
\[
\length (R/(\frq{I})^k) = \left( \frac{\eh(I)}{d!}k^d + O(k^{d-1})\right) q^d.
\]

Recently, Trivedi gave a further insight to the problem by describing the $O(k^{d-1})$ term in the formula. 
Recall that the Hilbert coefficients of $I$, $\eh_i(I)$, are defined by  the Hilbert--Samuel polynomial: 
for all $k \gg 0$
\[
\length (R/I^k) = \sum_{i = 0}^{d} (-1)^i \binom{k + d - 1 - i}{d - i} \eh_i (I),
\]
where $\eh_0(I) =\eh(I)$.
In \cite{Trivedi} Trivedi showed that if $R$ is a standard graded ring and $I$ 
is a homogeneous ideal generated in same degree, then 
\[
\lim_{k \to \infty} \frac{\ehk(I^k) - \eh(I^k)/d!}{k^{d - 1}} = 
\frac{\eh(I)}{2(d-2)!} - \lim_{q \to \infty} \frac{\eh_1 (\frq{I})}{(d-1)!q^d},
\]
and the last limit exists.
The goal of this article is to provide a proof of this result in a full level of generality, 
{\it i.e.}, when $(R, \mf m)$ is local and $I$ is an arbitrary $\mf m$-primary ideal. 

We also want to point out that this result can be restated in the following form:
\[
\lim_{k \to \infty} \left( (d-1)! \frac{\ehk(I^k) - \binom{k + d - 1}{d} \eh (I) }{k^{d-1}} \right)
= - \lim_{q \to \infty} \frac{e_1(\frq{I})}{q^d}.
\]
Comparing this with the formula defining the Hilbert coefficients, we want to pose the following questions. 

\begin{question}
Does the limit 
\[
\lim_{q \to \infty} \frac{\eh_i (\frq{I})}{q^d}
\]
exist for all $i$?
\end{question}

\begin{question}\label{q decomposition}
Do we have for $k \gg 0$ that 
\[
\ehk(I^k) = \sum_{i = 0}^{d} (-1)^i \binom{k + d - 1 - i}{d - i} \lim_{q \to \infty} \frac{\eh_i (\frq{I})}{q^d}?
\]
\end{question}

It is not surprising that $\ehk(I^k)$ should be eventually a polynomial. 
For example, if $(R, \mf m)$ is a finite subring of a regular local ring $(S, \mf n)$, then 
$[S: R]\ehk(I) = \ehk (IS)[S/\mf n : R/\mf n]$ (\cite{WatanabeYoshida}) and, 
since Frobenius is flat in $S$ (\cite{Kunz1}), $\ehk(IS) = \length (S/IS)$.
Thus $\ehk(I^k)$ is a multiple of the Hilbert--Samuel polynomial of $IS$ in $S$.  
The hard task is to show that the Hilbert coefficients of $\frq{I}$ have the prescribed limit.
We believe that this is explained by an analogue of Proposition~\ref{addi} for further coefficients. 

\subsection{Methods}

Trivedi's proof uses uniform convergence of 
Hilbert--Kunz density function that she developed in \cite{Trivedi2},
thus her methods can be applied only in the graded setting. 
Our approach uses uniform convergence techniques for Hilbert--Kunz function
pioneered by Tucker in \cite{Tucker} and allows us to simplify the proofs and generalize the results of 
\cite{Trivedi}. 
Using the improved techniques, we 
are able to show the additivity property of the new limit invariant defined by 
$\eh_1 (\frq{I})$ (Proposition~\ref{addi}).

\section{Main results}\label{main section}

\subsection{A uniform convergence result}\label{HK equi convergence}

First, we establish a refinement of \cite[Lemma~3.2]{Tucker}.

\begin{lemma}\label{boundlemma}
Let $(R, \mf m)$ be a local ring of characteristic $p > 0$ and 
$M$ be a finitely generated $R$-module.
Then for every $\mf m$-primary ideal $I$ 
there exists a constant $C$ such that for all $q, k \geq 1$ we have
\[
\length (M/(\frq{I})^kM) < C (qk)^{\dim M}.
\]
\end{lemma}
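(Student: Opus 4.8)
The plan is to squeeze $(\frq{I})^{k}$ between a fixed power of $\mf m$ with exponent linear in $qk$ and then to invoke the standard polynomial bound coming from Hilbert--Samuel theory; this reduces the statement to the case $I = \mf m$. We may assume $M \neq 0$.

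First I would produce a containment $(\frq{I})^{k} \supseteq \mf m^{cqk}$ for a constant $c$ depending only on $R$ and $I$. Since $I$ is $\mf m$-primary, fix $s$ with $\mf m^{s} \subseteq I$ and let $n = \mu(\mf m)$. Two elementary observations suffice: taking Frobenius powers and taking ordinary powers of ideals both preserve inclusions; and a pigeonhole count shows $\mf m^{nq} \subseteq \frq{\mf m}$ (a product of $nq$ of the $n$ generators of $\mf m$ must repeat one generator at least $q$ times), while comparing monomial generators gives $(\frq{\mf m})^{sk} = \frq{(\mf m^{sk})} = (\frq{(\mf m^{s})})^{k}$. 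Stringing these together,
\[
(\frq{I})^{k} \;\supseteq\; \bigl(\frq{(\mf m^{s})}\bigr)^{k} \;=\; (\frq{\mf m})^{sk} \;\supseteq\; (\mf m^{nq})^{sk} \;=\; \mf m^{nsqk},
\]
so $M/(\frq{I})^{k}M$ is a quotient of $M/\mf m^{nsqk}M$ and hence $\length(M/(\frq{I})^{k}M) \leq \length(M/\mf m^{nsqk}M)$.

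The second ingredient is the estimate $\length(M/\mf m^{t}M) \leq C_{1}t^{\dim M}$ for all $t \geq 1$, with $C_{1}$ depending only on $M$: the Hilbert--Samuel function of $M$ is eventually a polynomial of degree $\dim M$ with positive leading coefficient, so one enlarges $C_{1}$ to absorb the finitely many small values (equivalently, sum $\length(\mf m^{i}M/\mf m^{i+1}M) = O((i+1)^{\dim M - 1})$ over $0 \leq i < t$). Taking $t = nsqk$ gives $\length(M/(\frq{I})^{k}M) \leq C_{1}(ns)^{\dim M}(qk)^{\dim M}$, and $C = C_{1}(ns)^{\dim M} + 1$ yields the strict inequality. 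I do not anticipate a real obstacle; the only point needing care is to keep the exponent in $(\frq{I})^{k} \supseteq \mf m^{(\cdots)}$ genuinely linear in both $q$ and $k$ simultaneously, which is exactly the strengthening over \cite[Lemma~3.2]{Tucker} that produces the joint $(qk)^{\dim M}$ bound rather than separate powers of $q$ and of $k$.
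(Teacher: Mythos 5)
Your proof is correct and follows essentially the same strategy as the paper: sandwich $(\frq{I})^{k}$ above an ordinary power of a fixed $\mf m$-primary ideal with exponent linear in $qk$, then invoke the polynomial bound on the Hilbert--Samuel function. The paper is marginally more direct, applying the pigeonhole containment $I^{\mu q} \subseteq \frq{I}$ (with $\mu = \mu(I)$) to $I$ itself and bounding $\length(M/I^{\mu qk}M)$, rather than routing through $\mf m^{s} \subseteq I$ and $\mf m^{nq} \subseteq \frq{\mf m}$, but the two arguments are the same in substance.
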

\begin{proof}
Let $\mu $ be the number of generators of $I$, then $I^{\mu q} \subseteq \frq{I}$, 
so 
\[
\length (M/(\frq{I})^kM) \leq \length (M/I^{\mu kq}M).
\]
Using the Hilbert--Samuel function of $M$ with respect to $I$ we may find a constant $B$ such that
$\length (M/I^nM) \leq B n^{\dim M}$, so for all $k$ and $q$ we have
\[
\length (M/I^{\mu kq}M) \leq B(\mu k q)^{\dim M} = B (\mu)^{\dim M} (kq)^{\dim M}.
\]
Thus the claim follows for the constant $C : = B (\mu)^{\dim M}$. 
\end{proof}

Now following \cite[Lemma~3.3]{Tucker} we can easily obtain the following lemma.

\begin{lemma}\label{min primes}
Let $(R, \mf m)$ be a reduced local ring  of dimension $d > 0$ and characteristic $p > 0$ and $M,N$ be finitely generated $R$-modules.
Suppose $M_\mf p \cong  N_\mf p$ for any minimal prime $\mf p$ such that $\dim R/\mf p= \dim R$.
Then for every $\mf m$-primary ideal $I$ 
there exists a constant $C$ such that for all $k,q \geq 1$ 
we have
\[
|\length (M/(\frq{I})^kM) - \length (N/(\frq{I})^kN)| < C (kq)^{d - 1}.
\] 
\end{lemma}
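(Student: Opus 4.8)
The plan is to reduce the comparison of $\length(M/(\frq I)^k M)$ and $\length(N/(\frq I)^k N)$ to situations controlled by Lemma~\ref{boundlemma}, following the template of \cite[Lemma~3.3]{Tucker}. First I would reduce to the case of a \emph{single} comparison module: since $M_\mf p \cong N_\mf p$ at every minimal prime $\mf p$ with $\dim R/\mf p = d$, there is a nonzerodivisor (equivalently, an element avoiding all such minimal primes, using that $R$ is reduced) $c \in R$ and an $R$-module homomorphism $\varphi\colon M \to N$ whose localization at each top-dimensional minimal prime is an isomorphism; then both $\ker\varphi$ and $\operatorname{coker}\varphi$ are supported away from those primes, hence have dimension at most $d-1$. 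By symmetry it is enough to bound $|\length(M/(\frq I)^k M) - \length(\operatorname{im}\varphi/(\frq I)^k \operatorname{im}\varphi)|$ and likewise for $N$, so we are reduced to showing: if $L \to L'$ is a map of finite modules with kernel and cokernel of dimension $\le d-1$, then $|\length(L/(\frq I)^k L) - \length(L'/(\frq I)^k L')| < C(kq)^{d-1}$ for a uniform $C$.

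Next I would run the standard argument with the snake lemma. Write $K = \ker$, $Q = \operatorname{coker}$ of the map $L \to L'$, and let $L'' = \operatorname{im}$. Tensoring the short exact sequences $0 \to K \to L \to L'' \to 0$ and $0 \to L'' \to L' \to Q \to 0$ with $R/(\frq I)^k$ and taking the resulting long exact sequences of $\operatorname{Tor}$'s (or just the right-exact pieces, estimating the error), the difference $|\length(L/(\frq I)^k L) - \length(L'/(\frq I)^k L')|$ is bounded by a sum of terms of the form $\length(K/(\frq I)^k K)$, $\length(Q/(\frq I)^k Q)$, and $\length(\operatorname{Tor}_1^R(R/(\frq I)^k, -))$ for $K$, $Q$, and $L''$. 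The length-$0$-th terms $\length(K/(\frq I)^k K)$ and $\length(Q/(\frq I)^k Q)$ are each $< C'(kq)^{d-1}$ by Lemma~\ref{boundlemma}, since $\dim K, \dim Q \le d-1$. So the only thing left is a uniform bound on the $\operatorname{Tor}_1$ terms.

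For that I would use the standard trick: present $(\frq I)^k$ by generators, or rather observe that $\operatorname{Tor}_1^R(R/(\frq I)^k, L'')$ injects into a module built from the relations among a generating set of $(\frq I)^k$, and the length of such a module is again controlled — after passing to a finite free presentation of $L''$ — by the lengths $\length(L''/(\frq I)^k L'')$ and lengths of the form $\length((\frq I)^k L'' : z)/(\frq I)^k L'')$ for the entries $z$ of the presentation matrix; each of these is in turn bounded by $\length(\widetilde{L}/(\frq I)^k \widetilde{L})$ for suitable subquotients $\widetilde L$, and again Lemma~\ref{boundlemma} gives a bound of the shape $C''(kq)^{\dim}$. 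The point is that every module appearing in these estimates is either a fixed submodule/quotient of dimension $\le d-1$, or contributes with a genuinely lower-dimensional support, so Lemma~\ref{boundlemma} always yields the exponent $d-1$ (or less). Collecting all the constants gives the desired uniform $C$. I expect the main obstacle to be the bookkeeping in the $\operatorname{Tor}_1$ estimate: one must be careful that the bound is uniform in \emph{both} $k$ and $q$ simultaneously — this is exactly where the joint $(kq)^{\dim M}$ form of Lemma~\ref{boundlemma}, rather than separate bounds in $k$ and in $q$, is essential, and it is the reason Lemma~\ref{boundlemma} was stated in that strengthened form.
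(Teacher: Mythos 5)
Your first reduction is exactly the one the paper intends (it is Tucker's Lemma~3.3 argument adapted to the ideals $(\frq{I})^k$): since $R$ is reduced, $R_{\mf p}$ is a field at each top-dimensional minimal prime, so the local isomorphisms can be patched into a single $\varphi\colon M\to N$ whose kernel $K$ and cokernel $Q$ have dimension at most $d-1$, and Lemma~\ref{boundlemma} bounds $\length(K/(\frq{I})^kK)$ and $\length(Q/(\frq{I})^kQ)$ by $C(kq)^{d-1}$. The gap is in how you close the estimate. Writing $L''=\operatorname{im}\varphi$ and $J=(\frq{I})^k$, right-exactness of tensor and the surjection $M\twoheadrightarrow L''$ give $\length(L''/JL'')\le\length(M/JM)\le\length(L''/JL'')+\length(K/JK)$ and $\length(N/JN)\le\length(L''/JL'')+\length(Q/JQ)$ for free; but the remaining inequality $\length(N/JN)\ge\length(L''/JL'')-C(kq)^{d-1}$ is \emph{not} free, because $L''$ sits inside $N$ as a submodule: the deficit is $\length((JN\cap L'')/JL'')$, a quotient of $\operatorname{Tor}_1^R(R/J,Q)$. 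So in your one-map setup a $\operatorname{Tor}_1$ bound really is required for one direction of the inequality, and your sketch for producing it does not work: Lemma~\ref{boundlemma} only controls lengths of the form $\length(L/JL)$, and if you try to control $(S\cap JF)/JS$ via Artin--Rees together with the inclusions $I^{\mu qk}\subseteq(\frq{I})^k\subseteq I^{qk}$, the two sides differ by a Hilbert--Kunz-versus-Hilbert--Samuel discrepancy of order $(kq)^{d}$, not $(kq)^{d-1}$. (For $L''$ itself, which has dimension $d$, an $O((kq)^{d-1})$ bound on $\operatorname{Tor}_1$ is even less accessible, and fortunately is never needed.)

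The repair, which is what ``following Tucker'' amounts to, is to avoid $\operatorname{Tor}$ altogether by using \emph{two} maps. The hypothesis is symmetric in $M$ and $N$, so the same construction also produces $\psi\colon N\to M$ that is an isomorphism at the top-dimensional minimal primes. For any map $\alpha\colon A\to B$ with $\dim\operatorname{coker}\alpha\le d-1$, right-exactness gives
\[
\length(B/JB)\le\length(\operatorname{im}\alpha/J\operatorname{im}\alpha)+\length(\operatorname{coker}\alpha/J\operatorname{coker}\alpha)\le\length(A/JA)+\length(\operatorname{coker}\alpha/J\operatorname{coker}\alpha),
\]
using only that $\operatorname{im}\alpha$ is a quotient of $A$. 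Applying this once to $\varphi$ and once to $\psi$, and invoking Lemma~\ref{boundlemma} for the two cokernels, yields both directions of the desired inequality; kernels and $\operatorname{Tor}_1$ never enter.
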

%

The lemma now can be applied to modules $F_* M$ and $M$. 
Recall that we use $F_* M$ to denote an $R$-module obtained from $M$ 
via the restriction of scalars through the Frobenius endomorphism $F \colon R \to R$.
Thus $F_* M$ is isomorphic to $M$ as an abelian group, but elements of $R$ act as $p$-powers.
So, for any ideal $I$, $IF_* M \cong F_* I^{[p]}M$. 

The ranks of $M$ and $F_*^e M$ can be compared via a result of
Kunz (\cite[Proposition~2.3]{Kunz2}) who observed how localization affects the residue field:
if $\mf p \subseteq \mf q$ are prime ideals then
\[[k(\mf p) : k(\mf p)^p] = [k(\mf q) : k(\mf q)^p] p^{\dim R_\mf q/\mf pR_\mf q}.\]
It follows that in a reduced local ring $(R, \mf m, k)$, for every minimal prime $\mf p$ such that 
$\dim R/\mf p = \dim R$, the vector spaces $(F_* M)_\mf p \cong F_* M_\mf p$
and $\oplus^{p^d [k : k^p]} M_\mf p$ have equal dimension.  
Now, as in \cite[Section~3]{equi} we may use the improved constant of Lemma~\ref{boundlemma}
to follow \cite[Proposition~3.4, Corollary~{3.5}, Theorem~3.6]{Tucker} 
and obtain the following result.

\begin{theorem}\label{convergence}
Let $(R, \mf m)$ be a local ring of dimension $d > 0$ and characteristic $p > 0$, 
$I$ an $\mf m$-primary ideal, 
and $M$ be a finitely generated $R$-module.
There exists a constant $C$ and a constant $q_0 \geq 1$
such that for every $q, q',k \geq 1$ 
\[
\left |\frac{\length (M/(I^{[q_0q]})^kM)}{q^d} - \frac{\length (M/(I^{[q_0q'q]})^kM)}{(q')^d q^d} \right| 
< C \frac{k^{d-1}}{q}.
\]
\end{theorem}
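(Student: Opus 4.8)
The plan is to reduce to a reduced, $F$-finite ring, to establish a single comparison between the consecutive Frobenius powers $I^{[q]}$ and $I^{[pq]}$ carrying an error of order $(kq)^{d-1}$ that is \emph{uniform in $k$}, and then to telescope; this follows \cite[Proposition~3.4, Corollary~{3.5}, Theorem~3.6]{Tucker}, now run with the refined Lemmas~\ref{boundlemma} and~\ref{min primes} in place of the estimates of \cite{Tucker}. First I would make the standard reductions. Completing $R$ changes none of the lengths $\length(M/(I^{[q]})^kM)$; applying a $\Gamma$-construction to $\hat R$ makes it $F$-finite without altering its maximal ideal, hence again without changing these lengths; and, as in the proof of \cite[Theorem~3.6]{Tucker}, a prime filtration of $M$ reduces us to the case that $R$ is reduced, the nilpotent and the low-dimensional contributions (from subquotients $R/\mf p$ with $\dim R/\mf p < d$) being of order $(kq)^{d-1}$ by Lemma~\ref{boundlemma}. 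Fix $q_0$ large enough to absorb these reductions.

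The crux is the one-step estimate: there is a constant $C_0$, independent of $k$ and $q$, such that
\[
\left| \length\!\left(M/(I^{[pq]})^kM\right) - p^d\, \length\!\left(M/(I^{[q]})^kM\right) \right| \le C_0\,(kq)^{d-1} \qquad\text{for all } k,q \ge 1.
\]
To see this, recall that for any ideal $\mf a$ one has an isomorphism $F_*M/\mf a F_*M \cong F_*(M/\mf a^{[p]}M)$ of $R$-modules, since $R$ acts on $F_*M$ through $p$-th powers. Taking $\mf a=(I^{[q]})^k$, using $\bigl((I^{[q]})^k\bigr)^{[p]}=(I^{[pq]})^k$ and the identity $\length_R(F_*N)=[k(\mf m):k(\mf m)^p]\,\length_R(N)$ for finite length $N$ (valid since the composition factor $k(\mf m)$ becomes $F_*\,k(\mf m)$, of length $[k(\mf m):k(\mf m)^p]$ over $R$), we obtain
\[
\length_R\!\left(F_*M/(I^{[q]})^kF_*M\right) = [k(\mf m):k(\mf m)^p]\,\length_R\!\left(M/(I^{[pq]})^kM\right).
\]
On the other hand, Kunz's residue-degree formula recalled above gives $[k(\mf p):k(\mf p)^p]=[k(\mf m):k(\mf m)^p]\,p^{d}$ for every minimal prime $\mf p$ with $\dim R/\mf p=d$, so $F_*M$ and $M^{\oplus p^{d}[k(\mf m):k(\mf m)^p]}$ have isomorphic localizations at all such $\mf p$. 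Hence Lemma~\ref{min primes}, applied to these two modules with the ideal $I$ and parameters $q,k$, bounds the difference of $\length_R(F_*M/(I^{[q]})^kF_*M)$ and $p^{d}[k(\mf m):k(\mf m)^p]\,\length_R(M/(I^{[q]})^kM)$ by $C\,(kq)^{d-1}$; dividing by $[k(\mf m):k(\mf m)^p]$ yields the one-step estimate.

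Now I would iterate. Write $q'=p^{t}$ and $f(e)=\length(M/(I^{[p^{e}q_0]})^kM)$ for $0\le e\le t$. From the telescoping identity
\[
f(t)-p^{td}f(0)=\sum_{e=0}^{t-1}p^{(t-1-e)d}\bigl(f(e+1)-p^{d}f(e)\bigr)
\]
and the one-step estimate, the left side is bounded by $C_0\sum_{e=0}^{t-1}p^{(t-1-e)d}(k\,p^{e}q_0)^{d-1}$, whose successive terms decrease geometrically with ratio $p^{-1}$ (the factor $p^{d}$ lost from $p^{(t-1-e)d}$ at each step outweighing the factor $p^{d-1}$ gained by the error), so the sum is $O\bigl(k^{d-1}(q')^{d}\bigr)$. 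Thus $\bigl|\length(M/(I^{[q_0q']})^kM)-(q')^{d}\length(M/(I^{[q_0]})^kM)\bigr|\le C_1\,k^{d-1}(q')^{d}$, and dividing by $(q')^{d}q^{d}$ and using $q^{d}\ge q$ gives the claimed inequality with $C=C_1$.

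The step I expect to be the main obstacle is not the telescoping, which is routine, but keeping the constants uniform in the exponent $k$ throughout the comparison with $F_*M$: this is exactly what the sharpened Lemmas~\ref{boundlemma} and~\ref{min primes} provide, replacing the $O(q^{d-1})$ errors of \cite{Tucker} by $O((kq)^{d-1})$. The reduction to a reduced $F$-finite ring also needs care so as not to destroy this $k^{d-1}$-level precision, and here it is the hypothesis that $I$ is $\mf m$-primary, together with Lemma~\ref{boundlemma}, that keeps the low-dimensional and nilpotent contributions of the required order.
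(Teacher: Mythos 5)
Your proof is correct and follows exactly the route the paper indicates (Tucker's Proposition~3.4, Corollary~3.5 and Theorem~3.6, rerun with the $k$-uniform Lemmas~\ref{boundlemma} and~\ref{min primes}): the reduction to a reduced $F$-finite ring, the comparison of $F_*M$ with $M^{\oplus p^d[k(\mf m):k(\mf m)^p]}$ at the top-dimensional minimal primes, the one-step estimate, and the geometric telescoping are all as intended. The only point worth noting is that the displayed inequality should read $I^{[q_0q]}$ and $I^{[q_0q'q]}$ for Corollary~\ref{uni} to apply; your argument yields that version verbatim by starting the telescope at $Q=q_0q$ instead of $q_0$.
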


\begin{corollary}\label{uni}
Let $(R, \mf m)$ be a local ring of dimension $d > 0$ and characteristic $p > 0$, $I$ be an $\mf m$-primary ideal,
and $M$ be a finitely generated $R$-module.
Then the bisequence
\[
\frac{\length (M/(\frq{I})^kM)}{q^d k^{d - 1}}
\]
converges uniformly, independently of $k$, to its limit $\ehk (I^k, M)/k^{d-1}$.
\end{corollary}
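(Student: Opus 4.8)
The plan is to deduce the corollary from Theorem~\ref{convergence} by letting the finer Frobenius exponent tend to infinity. Note first that $(I^k)^{[q]} = (\frq{I})^k$ for every $q$, so $I^k$ is $\mf m$-primary and, for each fixed $k$,
\[
\ehk(I^k, M) = \lim_{q \to \infty} \frac{\length(M/(\frq{I})^k M)}{q^d};
\]
hence the bisequence in the statement converges in $q$ for each fixed $k$, its limit is $\ehk(I^k, M)/k^{d-1}$, and all that needs proof is that this convergence happens at a rate independent of $k$.

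To that end fix $k$ and apply Theorem~\ref{convergence}, whose constant $C$ depends only on $I$ and $M$ and not on the base Frobenius power; taking that base power to be $q$ itself gives, after renaming the exponents,
\[
\left| \frac{\length(M/(\frq{I})^k M)}{q^d} - \frac{\length(M/(I^{[qq']})^k M)}{(qq')^d} \right| < C\,\frac{k^{d-1}}{q}
\]
for every power $q$ of $p$ and every $q' \geq 1$. Dividing by $k^{d-1}$ is the decisive move: the right-hand side becomes $C/q$, now free of $k$ --- the normalization by $k^{d-1}$ in the corollary is exactly what cancels the factor $k^{d-1}$ produced by Theorem~\ref{convergence}.

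Holding $q$ fixed and letting $q' \to \infty$ along powers of $p$, the displayed inequality shows at once that $\bigl(\length(M/(\frq{I})^k M)/q^d\bigr)_q$ is Cauchy, hence convergent with limit $\ehk(I^k, M)$ --- so Monsky's theorem is not even needed as input here --- and passing to that limit gives
\[
\left| \frac{\length(M/(\frq{I})^k M)}{q^d k^{d-1}} - \frac{\ehk(I^k, M)}{k^{d-1}} \right| \leq \frac{C}{q}
\]
for all $q$ and all $k \geq 1$, which is the claimed uniform convergence independent of $k$. I anticipate no genuine difficulty at this stage: all the substance --- the comparison of $F_*M$ with a direct sum of copies of $M$ at the top-dimensional minimal primes via Lemma~\ref{min primes}, and the telescoping over Frobenius powers --- is already inside Theorem~\ref{convergence}. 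The only points to watch are bookkeeping ones: that the error term coming out of Theorem~\ref{convergence} carries exactly the factor $k^{d-1}$ and nothing worse, and that the limit $q' \to \infty$ is taken along the same sequence of prime powers that defines $\ehk(I^k,M)$.
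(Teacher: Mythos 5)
Your route is the paper's: let $q' \to \infty$ in Theorem~\ref{convergence}, identify the resulting limit with a multiple of $\ehk(I^k,M)$ via $(I^k)^{[q]} = (\frq{I})^k$, and divide by $k^{d-1}$ so that the error $Ck^{d-1}/q$ becomes $C/q$, free of $k$. The one step that is not justified is ``taking that base power to be $q$ itself.'' Theorem~\ref{convergence} asserts the existence of \emph{one} pair $(C, q_0)$; it does not say that the inequality holds for every choice of base power with a constant depending only on $I$ and $M$, and re-basing is not free: substituting $q \mapsto q_1 q$ in the theorem's inequality and multiplying through by $q_1^d$ yields the statement for the base $q_0 q_1$ only with the inflated constant $Cq_1^{d-1}$, and the theorem gives no inequality at all for bases that are not multiples of $q_0$ (in particular for base $1$). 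So your displayed inequality, with first term $\length(M/(\frq{I})^k M)/q^d$ and error $Ck^{d-1}/q$ ``for every power $q$ of $p$,'' does not follow from the theorem as stated. The gap is harmless and the repair is exactly the paper's argument: keep $q_0$ fixed, let the theorem's own variable $q$ range, pass to the limit in $q'$, and divide by $q_0^d k^{d-1}$ to get
\[
\left| \frac{\length\bigl(M/(I^{[q_0 q]})^k M\bigr)}{(q_0 q)^d\, k^{d-1}} - \frac{\ehk(I^k, M)}{k^{d-1}} \right| < \frac{C}{q_0^d}\cdot\frac 1q
\]
for all $k$ and all $q$. Since every Frobenius power $\geq q_0$ has the form $q_0 q$, this already gives convergence in $q$ uniform in $k$; the finitely many powers below $q_0$ are irrelevant for uniform convergence. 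The rest of your write-up --- the identity $(I^k)^{[q]}=(\frq{I})^k$, the Cauchy observation, and the point that dividing by $k^{d-1}$ is what removes the $k$-dependence --- is correct and matches the paper.
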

\begin{proof}
First, observe that 
\[
\lim_{q' \to \infty} \frac{\length (M/I^{[q_0q'q]}M)}{(q')^d q^d}
= \ehk (I^{[q_0]}, M) = q_0^d \ehk (I, M).
\]
Thus, after letting let $q' \to \infty$ in Theorem~\ref{convergence} it follows that
\[
\left |\frac{\length (M/(I^{[qq_0]})^kM)}{q^d} -  q_0^d \ehk (I^k, M)\right| 
< C \frac{k^{d-1}}{q}.
\]
Hence, the assertion is obtained by replacing $C$ by $C/q_0^d$ and $q$ by $qq_0$. 
\end{proof}

\subsection{Existence of the limit and its addivity}

We want to use the uniform convergence via the following standard result:
if for a bisequence $a_{q, k}$ 
\begin{itemize}
\item $\lim\limits_{q \to \infty } a_{q, k}$ exists uniformly of $k$, and
\item $\lim\limits_{k \to \infty} a_{q, k}$ exists for all (sufficiently large) $q$,
\end{itemize}
then $\lim\limits_{q,k \to \infty} a_{q,k}$ exists and the iterated limits exist,
and they are all equal.

\begin{proposition}\label{decompose}
Let $(R, \mf m)$ be a local ring of dimension $d > 0$ and characteristic $p > 0$, $M$ be a finitely generated $R$-module, 
 and $I$ be an $\mf m$-primary ideal. Then the following two limits exist and are equal
\[
\lim_{k \to \infty} \left( (d-1)! \frac{\ehk(I^k, M) - \binom{k + d - 1}{d} \eh (I, M) }{k^{d-1}} \right)
= - \lim_{q \to \infty} \frac{e_1(\frq{I}, M)}{q^d}.
\]

In other words,  
\[
\ehk(I^k, M) =  \eh (I, M) \binom{k + d - 1}{d} - \lim_{q \to \infty} \frac{e_1(\frq{I}, M)}{q^d} \binom{k + d - 2}{d - 1} + o(k^{d - 1}).
\]
\end{proposition}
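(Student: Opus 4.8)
The plan is to apply the standard double-limit principle quoted just before the statement to a suitably chosen bisequence $a_{k,q}$. The natural choice is
\[
a_{k,q} = (d-1)!\,\frac{\dfrac{\length(R/(\frq{I})^kR)}{q^d} - \binom{k+d-1}{d}\eh(I)}{k^{d-1}},
\]
or rather its correction built from the Hilbert--Samuel expansion of $\frq{I}$. First I would fix $q$ and let $k \to \infty$: since $\frq{I}$ is $\mf m$-primary, for $k \gg 0$ the length $\length(R/(\frq{I})^kR)$ is given by the Hilbert--Samuel polynomial of $\frq{I}$, so
\[
\length(R/(\frq{I})^kR) = \eh(\frq{I})\binom{k+d-1}{d} - \eh_1(\frq{I})\binom{k+d-2}{d-1} + O(k^{d-2}).
\]
Dividing by $q^d$ and using $\eh(\frq{I}) = q^d\,\eh(I)$ (a standard fact, e.g.\ from the associativity formula together with flatness of Frobenius on the punctured spectrum after reduction, or directly from $I^{q\mu}\subseteq \frq{I}\subseteq I^{q}$ and a limiting argument), the leading term cancels the $\binom{k+d-1}{d}\eh(I)$ subtraction, and one gets
\[
\lim_{k\to\infty} a_{k,q} = -\,\frac{\eh_1(\frq{I})}{q^d}
\]
for every fixed $q$ (here one absorbs the $(d-1)!$ and the binomial normalization so that $\binom{k+d-2}{d-1}/k^{d-1} \to 1/(d-1)!$).

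Next I would let $q \to \infty$ for fixed $k$. By Corollary~\ref{uni}, the bisequence $\length(R/(\frq{I})^kR)/(q^d k^{d-1})$ converges uniformly in $k$ to $\ehk(I^k)/k^{d-1}$; subtracting the ($q$-independent, after scaling) term $\binom{k+d-1}{d}\eh(I)/k^{d-1}$ and multiplying by $(d-1)!$ preserves uniform convergence, so $\lim_{q\to\infty} a_{k,q}$ exists uniformly in $k$, with value
\[
(d-1)!\,\frac{\ehk(I^k) - \binom{k+d-1}{d}\eh(I)}{k^{d-1}}.
\]
Thus both hypotheses of the double-limit principle hold: the $q$-limit exists uniformly in $k$, and the $k$-limit exists for every (large) $q$. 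The principle then yields that $\lim_{q\to\infty}\eh_1(\frq{I})/q^d$ exists, that $\lim_{k\to\infty}$ of the left-hand expression exists, and that the two iterated limits coincide, which is exactly the displayed equality. The final ``in other words'' reformulation is then a matter of unwinding the normalization: writing $L := \lim_{q\to\infty}\eh_1(\frq{I})/q^d$, the equality says $(d-1)!\big(\ehk(I^k) - \binom{k+d-1}{d}\eh(I)\big) = -L\,k^{d-1} + o(k^{d-1})$, and replacing $k^{d-1}$ by $(d-1)!\binom{k+d-2}{d-1} + O(k^{d-2})$ gives the stated polynomial-type expansion with error $o(k^{d-1})$.

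The main obstacle I expect is the uniformity bookkeeping in the first step: one must know that the error term $O(k^{d-2})$ in the Hilbert--Samuel expansion of $\frq{I}$, once divided by $q^d k^{d-1}$, really does go to zero, and this is not automatic since the Hilbert--Samuel polynomial of $\frq{I}$ varies with $q$ and its ``stabilization threshold'' could a priori grow with $q$. This is precisely where Lemma~\ref{boundlemma} (and the reduced-case refinement Lemma~\ref{min primes}, via passage to $R_{\mathrm{red}}$, which changes $\ehk$ and the relevant $\eh_1$ only by controlled amounts) is needed: it supplies a bound $\length(R/(\frq{I})^kR) < C(qk)^d$ valid for \emph{all} $q,k\ge 1$, which controls the lower-order Hilbert coefficients of $\frq{I}$ uniformly in $q$ and legitimizes taking the $k\to\infty$ limit of $a_{k,q}$ term by term. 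A secondary point to handle carefully is the reduction to the reduced (indeed, to the equidimensional reduced) case, since both $\ehk$ and $\eh_1(\frq{I})/q^d$ must be checked to behave additively over the relevant short exact sequences — but this is exactly the content foreshadowed by Proposition~\ref{addi} and should cause no real difficulty here.
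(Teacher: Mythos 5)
Your proposal is correct and follows essentially the same route as the paper: apply the double-limit interchange principle to the bisequence $(d-1)!\bigl(\length(R/(\frq{I})^k) - \binom{k+d-1}{d}\eh(\frq{I})\bigr)/(q^dk^{d-1})$, using the Hilbert--Samuel polynomial of $\frq{I}$ (and $\eh(\frq{I}) = q^d\eh(I)$) for the $k$-limit at fixed $q$ and Corollary~\ref{uni} for the uniform $q$-limit. The worries in your final paragraph are not needed for this step: for fixed $q$ the Hilbert--Samuel polynomial is exact for $k \gg 0$, so no uniformity in $q$ is required in that direction, and no reduction to the reduced case enters here (it is already absorbed into the proof of Theorem~\ref{convergence}).
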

\begin{proof}
Observe that in the left side of the assertion we have 
\[
\lim_{k \to \infty} \frac{\ehk(I^k, M) - \binom{k + d - 1}{d} \eh (I, M) }{k^{d-1}} 
= 
\lim_{k \to \infty} \lim_{q \to \infty}
\frac{\length (M/\frq{(I^k)}M) - \binom{k + d - 1}{d} \eh (\frq{I}, M) }{q^dk^{d-1}}.
\]
Now, we may use Corollary~\ref{uni} to interchange the order of limits and get that
\[
\lim_{q \to \infty} \lim_{k \to \infty}
\left( (d-1)! \frac{\length (M/(\frq{I})^kM) - \binom{k + d - 1}{d} \eh (\frq{I}, M) }{q^dk^{d-1}} \right)
= - \lim_{q \to \infty} \frac{\eh_1 (\frq{I}, M)}{q^d}.
\]

\end{proof}

We record the following additive property. 

\begin{proposition}\label{addi}
Let $(R, \mf m)$ be a local ring of dimension $d > 0$ and characteristic $p > 0$ and $I$ be an $\mf m$-primary ideal. 
Let 
\[
0 \to L \to M \to N \to 0
\]
be an exact sequence of finitely generated $R$-modules. 
Then 
\[
\lim_{q \to \infty} \frac{\eh_1(\frq{I}, M)}{q^d} = 
\lim_{q \to \infty} \frac{\eh_1(\frq{I}, N)}{q^d} + \lim_{q \to \infty} \frac{\eh_1(\frq{I}, L)}{q^d}.
\]
\end{proposition}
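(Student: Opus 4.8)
The plan is to exploit the additivity of the ordinary Hilbert coefficient $\eh_1$ for each fixed Frobenius power, together with the uniform convergence result of Corollary~\ref{uni}, to pass the additivity to the limit. For a fixed $q$, the short exact sequence $0 \to L \to M \to N \to 0$ gives, upon tensoring and taking lengths, that
\[
\length(M/(\frq{I})^k M) = \length(N/(\frq{I})^k N) + \length(L/(\frq{I})^k L) - \length\big((L \cap (\frq{I})^k M)/(\frq{I})^k L\big).
\]
The error term here is the length of $\operatorname{Tor}_1^R(N, R/(\frq{I})^k)$, and by the Artin--Rees lemma it is bounded, for $k \gg 0$, by a polynomial in $k$ of degree at most $d-1$ (with leading coefficient depending on $q$). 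Thus the degree-$d$ parts match exactly ($\eh(\frq{I}, M) = \eh(\frq{I}, N) + \eh(\frq{I}, L)$, which is classical), but the degree-$(d-1)$ coefficients — i.e., the $\eh_1$'s — differ by the contribution of this Tor term, which I must control uniformly in $q$.

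The key step is therefore to bound the Artin--Rees error \emph{uniformly}. First I would fix, by the uniform Artin--Rees lemma (or a direct argument via Lemma~\ref{boundlemma}), a single integer $c$, independent of $q$ and $k$, such that $L \cap (\frq{I})^{k} M \subseteq (\frq{I})^{k-c} L$ for all $k \geq c$ and all $q$ — here one uses that the $\frq{I}$ all contain a fixed power $I^{\mu}$ and are contained in $I$, so the Artin--Rees number for the $I$-adic filtration does the job for all of them at once. Then
\[
0 \leq \length\big((L \cap (\frq{I})^k M)/(\frq{I})^k L\big) \leq \length\big((\frq{I})^{k-c} L/(\frq{I})^k L\big) \leq \length(L/(\frq{I})^c L) \cdot (\text{something}) \,,
\]
and more usefully $\length((\frq{I})^{k-c}L/(\frq{I})^kL) = \length(L/(\frq{I})^k L) - \length(L/(\frq{I})^{k-c}L)$, which by Lemma~\ref{boundlemma} applied to $L$ is bounded by $C((k-c+1)^{d-1} + \cdots) q^{\dim L} \leq C' k^{d-2} q^d$ after extracting the difference — wait, more carefully: the difference of the two Hilbert--Samuel-type functions of $L$ with respect to $\frq{I}$ has degree $\dim L - 1 \leq d-1$ in $k$, but I need the \emph{coefficient} of $k^{d-1}$ in this difference, divided by $q^d$, to tend to $0$. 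That coefficient is a multiple of $\eh(\frq{I}, L)$ times a constant involving $c$ divided by... no: the difference $\length(R/(\frq I)^k L)-\length(R/(\frq I)^{k-c}L)$, as a polynomial in $k$, has degree $\dim L - 1$, and if $\dim L < d$ we are done trivially since everything is $o(k^{d-1}q^d/q^d)$; if $\dim L = d$, its $k^{d-1}$-coefficient equals $c \cdot \eh(\frq{I},L)/(d-1)!$ plus lower-order-in-$q$ terms, and $\eh(\frq{I}, L)/q^d \to \ehk(I,L)$ is bounded, so after dividing by $q^d$ this contributes a \emph{bounded} — not vanishing — amount. So a cruder split is not enough; I must instead compare $\eh_1(\frq{I},M)$ with $\eh_1(\frq{I},N)+\eh_1(\frq{I},L)$ \emph{directly} via the exact sequence, writing the defect as $\eh_1$ of the Tor module's associated function and bounding \emph{its} degree in $k$ by $d-2$, or better, invoke Proposition~\ref{decompose}: since $\ehk(I^k,M) = \ehk(I^k,N) + \ehk(I^k,L)$ for every $k$ (additivity of Hilbert--Kunz multiplicity of the $\mf m$-primary ideal $I^k$) and $\eh(I,M) = \eh(I,N)+\eh(I,L)$, subtracting the asymptotic expansions of Proposition~\ref{decompose} for $M$, $N$, $L$ and matching the coefficient of $\binom{k+d-2}{d-1}$ forces
\[
\lim_{q\to\infty}\frac{\eh_1(\frq I,M)}{q^d} = \lim_{q\to\infty}\frac{\eh_1(\frq I,N)}{q^d} + \lim_{q\to\infty}\frac{\eh_1(\frq I,L)}{q^d},
\]
each limit existing by Proposition~\ref{decompose}.

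This last route is the clean one: the whole content of the proposition has already been packaged into Proposition~\ref{decompose}, which expresses $\lim_q \eh_1(\frq{I},\bullet)/q^d$ in terms of the two genuinely additive invariants $\ehk(I^k,\bullet)$ and $\eh(I,\bullet)$. So the proof is: invoke Proposition~\ref{decompose} three times, use additivity of $\ehk(I^k,-)$ (standard, from additivity of length in the defining sequence) and of $\eh(I,-)$ (classical Hilbert--Samuel theory), and read off the claim by comparing the coefficients of $k^{d-1}$. The only point requiring a word is that additivity of $\ehk(J,-)$ in a short exact sequence holds for \emph{every} $\mf m$-primary $J$, applied here to $J = I^k$; this is immediate since $\length((R/\frq{J})\otimes -)$ is additive up to a $\operatorname{Tor}_1$ term whose length is $o(q^{\dim})$, a fact already implicit in the machinery above. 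The main obstacle, then, is purely bookkeeping: making sure the expansions in Proposition~\ref{decompose} are written with enough precision ($o(k^{d-1})$ error) that the coefficient extraction is rigorous, and noting the degenerate cases $\dim L < d$ or $\dim N < d$ separately, where the corresponding limit may be zero but the identity still holds.
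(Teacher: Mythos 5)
Your final argument is correct, but it is not the route the paper takes, and your first attempt (bounding the Artin--Rees/$\operatorname{Tor}_1$ defect directly) is rightly abandoned: as you observe yourself, a uniform Artin--Rees bound only shows that $\eh_1(\frq{I},M)-\eh_1(\frq{I},N)-\eh_1(\frq{I},L)$ is $O(q^d)$, not $o(q^d)$, so it cannot close the proof. The clean route you settle on --- apply Proposition~\ref{decompose} to each of $M$, $N$, $L$, use additivity of $\ehk(I^k,-)$ and of $\eh(I,-)$ in the short exact sequence, and match the coefficients of $k^{d-1}$ --- is valid and arguably more economical than the paper's proof. It does require reading Proposition~\ref{decompose} as a statement about a module $M$ (the displayed formula there omits $M$, but the hypothesis includes it and the proof, resting on Corollary~\ref{uni} for modules, goes through verbatim), and it requires the existence of all three limits, which is exactly what that module version supplies; there is no circularity, since Proposition~\ref{decompose} is proved independently. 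The paper instead argues directly with an $\varepsilon$-estimate: Corollary~\ref{uni} together with additivity of Hilbert--Kunz multiplicity bounds $\length(M/(\frq{I})^kM)-\length(N/(\frq{I})^kN)-\length(L/(\frq{I})^kL)$ by $\tfrac{\varepsilon}{4}q^dk^{d-1}$ uniformly in $k$ for $q\gg 0$, and then for each fixed $q$ one chooses $k$ large enough that the Hilbert--Samuel polynomials of $M$, $N$, $L$ with respect to $\frq{I}$ approximate the lengths; combining the estimates gives $|\eh_1(\frq{I},M)-\eh_1(\frq{I},N)-\eh_1(\frq{I},L)|/q^d<\varepsilon$. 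Both proofs ultimately rest on the same uniform convergence result; yours packages it through Proposition~\ref{decompose}, which identifies the limit with a manifestly additive combination of $\ehk(I^k,-)$ and $\eh(I,-)$, while the paper's keeps the estimate explicit and in particular exhibits the failure of additivity of $\eh_1(\frq{I},-)$ at finite $q$ as precisely the quantity killed by the $q^d$ in the denominator.
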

\begin{proof}
First of all, Hilbert--Kunz multiplicity is additive in short exact sequences, so 
by Corollary~\ref{uni} for any $\varepsilon > 0$ there exists $q_0$ such that for all $k \geq 1$ and all $q \geq q_0$
we have
\[
\frac{1}{q^dk^{d- 1}} 
\left| 
\length (M/(\frq{I})^k M) - \length (L/(\frq{I})^k L) - \length (N/(\frq{I})^k N)
\right| < \frac{\varepsilon}{4}.
\]
For any given $q \geq q_0$ and $\varepsilon$, we may find $k$ such that 
\[
\left | \frac{1}{k^{d-1}} \left (\length (M/(\frq{I})^k M) -\eh (\frq{I}, M)\binom{k + d - 1}{d} \right) + \eh_1(\frq{I}, M) \right| < \frac{\varepsilon}{4}
\]
and similarly for $N$ and $L$.

Furthermore,  we may rewrite
\begin{align*}
&\left |\eh_1(\frq{I}, M) - \eh_1(\frq{I}, N) - \eh_1(\frq{I}), L) \right | \leq \\ 
&\left | \frac{1}{k^{d-1}} \left (\length (M/(\frq{I})^k M) - \length (N/(\frq{I})^k N) - \length (L/(\frq{I})^k L) \right )
+ \eh_1(\frq{I}, M) - \eh_1(\frq{I}, N) - \eh_1(\frq{I}, L) \right | 
\\&+ \frac{1}{k^{d-1}}\left |\length (M/(\frq{I})^k M) - \length (N/(\frq{I})^k N) - \length (L/(\frq{I})^k L) \right |.
\end{align*}
But multiplicity is additive in short exact sequences, so 
\begin{align*}
&\left | \frac{1}{k^{d-1}} (\length (M/(\frq{I})^k M) - \length (N/(\frq{I})^k N) - \length (L/(\frq{I})^k L))
+ \eh_1(\frq{I}, M) - \eh_1(\frq{I}, N) - \eh_1(\frq{I}, L) \right | \leq \\
&\left | \frac{1}{k^{d-1}} \left (\length (M/(\frq{I})^k M) -\eh (\frq{I}, M)\binom{k + d - 1}{d} \right) + \eh_1(\frq{I}, M) \right| + \\
&\left |\frac{1}{k^{d-1}} \left (\length (N/(\frq{I})^k N) - \eh(\frq{I}, N)\binom{k + d - 1}{d} \right) + \eh_1(\frq{I}, N) \right | + \\
&\left |\frac{1}{k^{d-1}} \left (\length (L/(\frq{I})^k L) - \eh(\frq{I}, L)\binom{k + d - 1}{d} \right) + \eh_1(\frq{I}, L) \right | < \frac 3 4 \varepsilon. 
\end{align*}
Thus, combining the estimates, we get that for any $q \geq q_0$, 
\[
\frac{1}{q^d} \left |\eh_1(\frq{I}, M) - \eh_1(\frq{I}, N) - \eh_1(\frq{I}), L) \right | < 
\frac 1 {q^d} \frac 34 \varepsilon + \frac 1 4 \varepsilon < \varepsilon.
\]
\end{proof}

Thus, we generalize \cite[Proposition~3.8]{Trivedi}.

\begin{corollary}
Let $(R, \mf m)$ be a local ring of dimension $d > 0$ and characteristic $p > 0$.
Then for any finitely generated $R$-module $M$ we have
\[
\lim_{q \to \infty} \frac{\eh_1(\frq{I}, M)}{q^d} = 
\sum_{\dim R/\mf p = d} 
\length (M_\mf p) \lim_{q \to \infty} \frac{\eh_1(\frq{I}, R/\mf p)}{q^d}.
\]
\end{corollary}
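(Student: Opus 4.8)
The plan is to run the standard argument for an associativity formula: reduce to prime cyclic modules by means of a prime filtration, and then invoke the additivity proved in Proposition~\ref{addi}. Concretely, I would fix a filtration
\[
0 = M_0 \subsetneq M_1 \subsetneq \cdots \subsetneq M_n = M, \qquad M_j/M_{j-1} \cong R/\mf p_j,
\]
with each $\mf p_j$ a prime of $R$. The limit $\lim_{q \to \infty} \eh_1(\frq I, -)/q^d$ exists for every finitely generated $R$-module by the argument of Proposition~\ref{decompose}, whose only input is the uniform convergence of Corollary~\ref{uni}, which is valid for an arbitrary finite module. Applying Proposition~\ref{addi} successively to the short exact sequences $0 \to M_{j-1} \to M_j \to R/\mf p_j \to 0$ then gives
\[
\lim_{q \to \infty}\frac{\eh_1(\frq I, M)}{q^d} = \sum_{j=1}^{n} \lim_{q \to \infty}\frac{\eh_1(\frq I, R/\mf p_j)}{q^d}.
\]

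The next step is to discard every summand coming from a prime $\mf p$ with $\dim R/\mf p < d$, and this is the one place where something must actually be proved. Writing $N = R/\mf p$, the coefficient $\eh_1(\frq I, N)$ in the degree-$d$ Hilbert--Samuel expansion is, up to sign, $(d-1)!$ times the leading coefficient of $\length(N/(\frq I)^k N)$ regarded as a polynomial in $k$; in particular it vanishes outright when $\dim N \leq d-2$. The refinement recorded in Lemma~\ref{boundlemma} is exactly what handles the case $\dim N = d-1$: it produces a constant $C$, \emph{independent of both $q$ and $k$}, with $\length(N/(\frq I)^k N) < C(qk)^{d-1}$, so dividing by $k^{d-1}$ and letting $k \to \infty$ forces $|\eh_1(\frq I, N)| \leq (d-1)!\,C\,q^{d-1}$. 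Either way $\eh_1(\frq I, N)/q^d = O(q^{-1}) \to 0$, so the corresponding term drops out. I expect this to be the only genuine obstacle; everything else is bookkeeping, and it is precisely the reason Tucker's bound has to be sharpened to the uniform-in-$q$ form of Lemma~\ref{boundlemma}.

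It then remains to collect the surviving terms, those with $\dim R/\mf p_j = d = \dim R$, for which $\mf p_j$ is necessarily a minimal prime of $R$. For a minimal prime $\mf p$ the standard property of prime filtrations --- localize at $\mf p$, where each quotient $R/\mf p_j$ becomes zero unless $\mf p_j = \mf p$, in which case it is $k(\mf p)$, of length one --- shows that $\mf p$ occurs among $\mf p_1, \dots, \mf p_n$ exactly $\length(M_\mf p)$ times. Grouping equal terms in the displayed sum therefore yields
\[
\lim_{q \to \infty}\frac{\eh_1(\frq I, M)}{q^d} = \sum_{\dim R/\mf p = d} \length(M_\mf p)\,\lim_{q \to \infty}\frac{\eh_1(\frq I, R/\mf p)}{q^d},
\]
which is the asserted associativity formula. (When $R$ is reduced at its top-dimensional primes one may replace $\length(M_\mf p)\,\eh_1(\frq I, R/\mf p)$ by $\eh_1(\frq I, M/\mf pM)$, since then $\length(M_\mf p)$ equals the $R/\mf p$-rank of $M/\mf pM$ and the lower-dimensional pieces of a prime filtration of $M/\mf pM$ over $R/\mf p$ again disappear by the previous paragraph.)
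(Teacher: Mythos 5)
Your argument is correct and is exactly the paper's (one-line) proof carried out in full: take a prime filtration, apply the additivity of Proposition~\ref{addi} along it, kill the contributions of primes with $\dim R/\mf p < d$ by the uniform bound of Lemma~\ref{boundlemma} (which indeed gives $|\eh_1(\frq{I}, N)| \leq (d-1)!\,C q^{d-1}$ for $\dim N \leq d-1$), and count the occurrences of each top-dimensional minimal prime by $\length(M_\mf p)$. The one point worth flagging is that what you actually prove is the associativity formula with $R/\mf p$ on the right-hand side, which is the correct statement; the corollary as printed, carrying both the factor $\length(M_\mf p)$ and the module $M/\mf p M$, double-counts (already for $R$ reduced at $\mf p$ one has $\lim_{q} \eh_1(\frq{I}, M/\mf p M)/q^d = \length(M_\mf p) \lim_{q} \eh_1(\frq{I}, R/\mf p)/q^d$), so your version, and the reconciliation in your closing parenthetical, give the right reading of the statement.
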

\begin{proof}
Apply Proposition~\ref{addi} to a prime filtration of $M$. 
\end{proof}

\subsection{Further remarks}

In  \cite[Theorem~1.8]{WatanabeYoshidaTwo}
Watanabe and Yoshida proved the following generalization of \cite[Theorem~4.3]{Ooishi1}.

\begin{theorem}\label{WY}
Let $(R, \mf m)$ be a Cohen--Macaulay local ring of dimension $d$  and $I$ be an $\mf m$-primary ideal. 
Then 
\[
\ehk(I^n) \leq \eh(I) \binom{n + d - 2}{d} + \ehk(I)\binom{n + d -2}{d - 1}
= \eh(I) \binom{n + d - 1}{d} - (\eh(I) - \ehk(I)) \binom{n + d - 2}{d-1}
\]
Moreover, if $R$ is weakly F-regular and analytically unramified\footnote{It was brought to my attention by Kriti Goel that these assumptions are missing in the published version of this article.} then 
the equality holds for some $n \geq 2$ if and only if $I$ is stable, i.e., $I^2 = IJ$ for a minimal reduction $J$ of $I$.
\end{theorem}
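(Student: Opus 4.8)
The plan is to reduce first to the case of an infinite residue field by the flat local base change $R \to R[t]_{\mf m R[t]}$, which preserves the Cohen--Macaulay property, the multiplicities $\eh(I)$ and $\ehk(I)$, and the reduction number of $I$; so assume $R/\mf m$ is infinite and fix a minimal reduction $J = (x_1, \dots, x_d)$ of $I$. Since $R$ is Cohen--Macaulay, $x_1^q, \dots, x_d^q$ is a regular sequence for every $q$, so $\operatorname{gr}_{\frq{J}}(R)$ is a polynomial ring over $R/\frq{J}$; in particular $(\frq{J})^{j}/(\frq{J})^{j+1}$ is free over $R/\frq{J}$ of rank $\binom{j+d-1}{d-1}$, and summing over $0 \le j < n$ gives
\[
\length\bigl(R/(\frq{J})^n\bigr) \;=\; \binom{n+d-1}{d}\length(R/\frq{J}) \;=\; \binom{n+d-1}{d}\,q^d\,\eh(I),
\]
the last equality because $\length(R/\frq{J}) = \eh(\frq{J}) = q^d\,\eh(J) = q^d\,\eh(I)$ for a parameter ideal in a Cohen--Macaulay ring.

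The heart of the argument is the chain $(\frq{J})^n \subseteq (\frq{J})^{n-1}\frq{I} \subseteq (\frq{I})^n$, coming from $\frq{J} \subseteq \frq{I}$. The outer inclusion yields
\[
\length\bigl(R/(\frq{I})^n\bigr) \;\le\; \length\bigl(R/(\frq{J})^{n-1}\frq{I}\bigr) \;=\; \length\bigl(R/(\frq{J})^n\bigr) - \length\bigl((\frq{J})^{n-1}\frq{I}/(\frq{J})^n\bigr),
\]
so it remains to evaluate the last length. I would use that $(\frq{J})^{n-1}/(\frq{J})^n$ is free over $R/\frq{J}$ on the classes of the $\binom{n+d-2}{d-1}$ monomials $x_1^{q\alpha_1}\cdots x_d^{q\alpha_d}$ with $\alpha_1 + \dots + \alpha_d = n-1$, and that $(\frq{J})^{n-1}\frq{I}$ is generated by the products of these monomials with elements of $\frq{I}$; hence its image in $(\frq{J})^{n-1}/(\frq{J})^n$ is a direct sum of $\binom{n+d-2}{d-1}$ copies of $\frq{I}/\frq{J}$, and
\[
\length\bigl((\frq{J})^{n-1}\frq{I}/(\frq{J})^n\bigr) \;=\; \binom{n+d-2}{d-1}\,\length\bigl(\frq{I}/\frq{J}\bigr).
\]
In effect this reproduces the classical Northcott-type inequality for the Hilbert--Samuel function of the $\mf m$-primary ideal $\frq{I}$ in the Cohen--Macaulay ring $R$.

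Dividing the resulting bound $\length(R/(\frq{I})^n) \le \binom{n+d-1}{d}q^d\eh(I) - \binom{n+d-2}{d-1}\length(\frq{I}/\frq{J})$ by $q^d$ and letting $q \to \infty$ finishes the inequality: since $\length(\frq{I}/\frq{J}) = \length(R/\frq{J}) - \length(R/\frq{I}) = q^d\eh(I) - \length(R/\frq{I})$, the ratio $\length(\frq{I}/\frq{J})/q^d$ converges to $\eh(I) - \ehk(I)$, giving
\[
\ehk(I^n) \;\le\; \binom{n+d-1}{d}\eh(I) - \binom{n+d-2}{d-1}\bigl(\eh(I) - \ehk(I)\bigr),
\]
which is the second form in the statement; the identity $\binom{n+d-1}{d} = \binom{n+d-2}{d} + \binom{n+d-2}{d-1}$ rewrites it as the first. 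This part is elementary and independent of the uniform-convergence results above.

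As for the equality, the ``if'' direction is immediate: if $I^2 = IJ$ then $I^n = J^{n-1}I$ for all $n \ge 1$ by induction, so $(\frq{I})^n = (\frq{J})^{n-1}\frq{I}$ for every $q$; the single inequality used above is then an equality at each $q$, and the bound is attained in the limit. The ``only if'' direction is what I expect to be the main obstacle. Taking $n = 2$, the bound is attained exactly when $\length\bigl((\frq{I})^2/\frq{J}\frq{I}\bigr) = o(q^d)$, i.e.\ when $\ehk(I^2) = \ehk(JI)$, with $JI \subseteq I^2$ a reduction (so the two ideals share the same Hilbert--Samuel multiplicity). Deducing $I^2 = JI$ from this is the delicate point: it amounts to a strict-monotonicity property of Hilbert--Kunz multiplicity for this particular nested pair of $\mf m$-primary ideals, and the work lies in establishing it while excluding the tight-closure (or Frobenius-closure) phenomena that prevent $\ehk$ from being strictly monotone on general nested $\mf m$-primary ideals in an arbitrary Cohen--Macaulay ring -- which is also the step where hypotheses on $R$ beyond Cohen--Macaulayness, if needed, would enter.
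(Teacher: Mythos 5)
First, a remark on the comparison itself: the paper offers no proof of this statement --- it is quoted from Watanabe--Yoshida \cite{WatanabeYoshidaTwo} --- so your argument can only be judged on its own terms. The inequality part of your proposal is correct and is essentially the standard argument: after reducing to an infinite residue field, the chain $(\frq{J})^n \subseteq (\frq{J})^{n-1}\frq{I} \subseteq (\frq{I})^n = (I^n)^{[q]}$, the freeness of $(\frq{J})^{n-1}/(\frq{J})^{n}$ over $R/\frq{J}$ (valid because $x_1^q,\dots,x_d^q$ is a regular sequence in the Cohen--Macaulay ring $R$), and the identity $\length(R/\frq{J}) = \eh(\frq{J}) = q^d\eh(I)$ give the exact formula
$\length\bigl(R/(\frq{J})^{n-1}\frq{I}\bigr) = \binom{n+d-2}{d}q^d\eh(I) + \binom{n+d-2}{d-1}\length(R/\frq{I})$,
whence $\ehk(I^n) \le \ehk(J^{n-1}I) = \eh(I)\binom{n+d-2}{d} + \ehk(I)\binom{n+d-2}{d-1}$. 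Your identification of the image of $(\frq{J})^{n-1}\frq{I}$ inside the free module $(\frq{J})^{n-1}/(\frq{J})^n$ as $\binom{n+d-2}{d-1}$ copies of $\frq{I}/\frq{J}$ is the right computation, and the ``if'' direction of the equality statement ($I^2=IJ$ forces $(\frq{I})^n = (\frq{J})^{n-1}\frq{I}$ for every $q$) is also fine.

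The genuine gap is the ``only if'' direction, which you leave open; your diagnosis of why it is hard is accurate and worth making precise. Equality for a fixed $n\ge 2$ says exactly that $\ehk(I^n) = \ehk(J^{n-1}I)$ for the nested pair of $\mf m$-primary ideals $J^{n-1}I \subseteq I^n$, and by the Hochster--Huneke characterization of tight closure via Hilbert--Kunz multiplicity (for excellent, reduced, equidimensional rings) this is equivalent only to $I^n \subseteq (J^{n-1}I)^*$, not to $I^n = J^{n-1}I$. So the implication ``equality $\Rightarrow$ stable'' cannot be extracted from the multiplicity computation alone: one needs either the additional argument supplied in \cite{WatanabeYoshidaTwo} or further hypotheses guaranteeing that $J^{n-1}I$ is tightly closed. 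As written, the proposal establishes the inequality and one direction of the equality criterion, but the equivalence is incomplete.
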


This theorem provides us an evidence for Question~\ref{q decomposition} and a recipe 
for computing $\ehk(I^n)$. For example, it can be applied to integrally closed ideals in a two-dimensional 
rational singularity. 

\begin{corollary}[Northcott-type inequality]
Let $(R, \mf m)$ be a Cohen--Macaulay local ring of dimension $d$ and $I$ be an $\mf m$-primary ideal. 
Then 
\[
\lim_{q \to \infty} \frac{\eh_1(\frq{I})}{q^d} \geq \eh(I) - \ehk(I).  
\]
\end{corollary}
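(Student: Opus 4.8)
The plan is to combine the Watanabe--Yoshida bound (Theorem~\ref{WY}) with the polynomial-shape result of Proposition~\ref{decompose}. First I would take the inequality
\[
\ehk(I^n) \leq \eh(I) \binom{n + d - 1}{d} - (\eh(I) - \ehk(I)) \binom{n + d - 2}{d-1}
\]
and subtract $\eh(I)\binom{n+d-1}{d}$ from both sides, so that
\[
\ehk(I^n) - \eh(I)\binom{n+d-1}{d} \leq -(\eh(I) - \ehk(I)) \binom{n + d - 2}{d-1}.
\]
Then I would multiply by $(d-1)!/n^{d-1}$ (which is positive), flip the inequality, and let $n \to \infty$. On the right-hand side, $(d-1)!\binom{n+d-2}{d-1}/n^{d-1} \to 1$, so the limit is $\eh(I)-\ehk(I)$. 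On the left-hand side, Proposition~\ref{decompose} tells us that
\[
\lim_{n \to \infty} \left( (d-1)! \frac{\ehk(I^n) - \binom{n + d - 1}{d} \eh (I)}{n^{d-1}} \right) = - \lim_{q \to \infty} \frac{\eh_1(\frq{I})}{q^d},
\]
so the left-hand side converges to $\lim_{q \to \infty} \eh_1(\frq{I})/q^d$ after the sign flip. Putting the two sides together gives exactly
\[
\lim_{q \to \infty} \frac{\eh_1(\frq{I})}{q^d} \geq \eh(I) - \ehk(I),
\]
as claimed.

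There is essentially no obstacle here: the corollary is a formal consequence of the two cited results, and the only care needed is bookkeeping with the sign and with the leading term of the binomial coefficient $\binom{n+d-2}{d-1} = n^{d-1}/(d-1)! + O(n^{d-2})$. One should also note that Proposition~\ref{decompose} applies since $R$ is local of dimension $d$; if $d = 0$ the statement is vacuous (or one interprets it with the convention that both sides vanish), and the Cohen--Macaulay hypothesis is inherited by the setting of Theorem~\ref{WY}. I would write the proof in two or three lines, simply invoking Theorem~\ref{WY}, dividing by $n^{d-1}$, taking $n \to \infty$, and quoting Proposition~\ref{decompose} for the value of the resulting limit.
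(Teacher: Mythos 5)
Your argument is correct, and it is in fact the first of the two routes the paper itself names: its proof opens with ``We may use Theorem~\ref{WY} and Proposition~\ref{decompose}, or directly apply Northcott's inequality,'' and then writes out only the second route. That alternative is even shorter: Northcott's classical inequality $\eh_1(J) \geq \eh(J) - \length(R/J)$ for an $\mf m$-primary ideal $J$ of a Cohen--Macaulay ring is applied with $J = \frq{I}$; dividing by $q^d$ and letting $q \to \infty$, one uses $\length(R/\frq{I})/q^d \to \ehk(I)$ and $\eh(\frq{I}) \geq \eh(I^q) = q^d\eh(I)$ (from $\frq{I} \subseteq I^q$) to conclude. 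That route still relies on Proposition~\ref{decompose} to know that the limit on the left exists, but it bypasses Theorem~\ref{WY} entirely; your route instead leans on the Watanabe--Yoshida bound, a strictly stronger input whose equality case is what motivates the stability conjecture later in the paper, so the two proofs illuminate the inequality from opposite ends ($k \to \infty$ versus $q \to \infty$). Your bookkeeping --- subtracting $\eh(I)\binom{n+d-1}{d}$, normalizing by $(d-1)!/n^{d-1}$, using $(d-1)!\binom{n+d-2}{d-1}/n^{d-1} \to 1$, and quoting Proposition~\ref{decompose} for the value of the left-hand limit --- is exactly right; the only slip is verbal: multiplying by the positive factor $(d-1)!/n^{d-1}$ does not flip the inequality, the flip occurs only when you negate both sides at the very end.
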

\begin{proof}
We may use Theorem~\ref{WY} and Proposition~\ref{decompose}, or directly apply Northcott's inequality.
Namely, in \cite[Theorem~1]{Northcott} Northcott proved that every $\mf m$-primary ideal $J$
in a Cohen--Macaulay ring satisfies the inequality
$\eh_1(J) \geq \eh(J) - \length (R/J)$.
Applying this to $J = \frq{I}$ and passing to the limit, we see that 
\[
\lim_{q \to \infty} \frac{\eh_1(\frq{I})}{q^d} \geq \eh(I) - \ehk(I). 
\]
\end{proof}

Since the second coefficient of $\ehk(I^n)$ is the limit of the first Hilbert coefficients, naturally it should carry some information 
about $I$. 
Huneke and Ooishi (\cite[Theorem~2.1]{Huneke}, \cite[Theorem~3.3]{Ooishi}) showed that Northcott's inequality is equality 
if and only if $I$ is stable. Comparing this with Theorem~\ref{WY}, we are naturally led to the following speculation.

\begin{conjecture}
Let $(R, \mf m)$ be a weakly F-regular analytically unramified\footnote{As in Theorem~\ref{WY} these assumptions are missing in the published version. A recent preprint \cite{BGV} gives a counter-example to the published conjecture. This example is not weakly F-regular.} Cohen-Macaulay local ring.
Then an $\mf m$-primary ideal  $I$ is stable if and only if 
$\lim\limits_{q \to \infty} \frac{\eh_1(\frq{I})}{q^d} = \eh(I) - \ehk(I)$.
\end{conjecture}

The conjecture will follow if we will be able to show that
\[
\ehk(I^n) = \eh(I) \binom{n + d -1}{d} - \lim_{q \to \infty} \frac{\eh_1(\frq{I})}{q^d} \binom{n + d - 2}{d - 1} +
\lim_{q \to \infty} \frac{\eh_2(\frq{I})}{q^d} \binom{n + d - 3}{d - 2} + o(n^{d - 2}).
\]
Then, since $\eh_2(J) \geq 0$ by \cite{Narita}, it follows that for $n \gg 0$
 \[
\ehk(I^n) \geq \eh(I) \binom{n + d -1}{d} - \lim_{q \to \infty} \frac{\eh_1(\frq{I})}{q^d} \binom{n + d - 2}{d - 1},
\]
so if $\lim\limits_{q \to \infty} \frac{\eh_1(\frq{I})}{q^d} = \eh(I) - \ehk(I)$, then Theorem~\ref{WY}
shows that $I$ must be stable.

We may also give upper bounds on the limit. For example, Elias (\cite[Proposition~2.1]{Elias})
showed that in a Cohen--Macaulay ring of dimension at least one we have
\[
\eh_1(J) \leq (\eh(R) - 1) (\eh(J) - \eh(R) \ord (J)) + \eh_1(R).
\]
Taking $J = \frq{I}$ and passing to the limit we get
\[
\lim_{q \to \infty} \frac{\eh_1(\frq{I})}{q^d} \leq (\eh(R) - 1)\eh(I).
\]
Note that there exists a fixed $C$ such that $\mf m^{Cq} \subseteq \frq{I}$, 
thus $\lim\limits_{q \to \infty } \ord (\frq{I})/q^d = 0$ if $d > 1$.

\section*{Acknowledgements}

The author thanks Alessandro De Stefani, Kei-Ichi Watanabe, and Ken-Ichi Yoshida for useful discussions. 

\bibliographystyle{alpha}
\bibliography{powers}

\end{document}